\newtheorem{theorem}{Theorem}
\newtheorem{conjecture}{Conjecture}
\newtheorem{lemma}{Lemma}
\begin{document}
\baselineskip=17pt

\title{\bf Diophantine approximation with one prime of the form $\mathbf{p=x^2+y^2+1}$}

\author{\bf S. I. Dimitrov}

\date{\bf2020}
\maketitle

\begin{abstract}
Let $\varepsilon>0$ be a small constant.
In the present paper we prove that whenever $\eta$ is real and constants $\lambda _i$ satisfy some necessary
conditions, then there exist infinitely many prime triples $p_1,\, p_2,\, p_3$ satisfying the inequality
\begin{equation*}
|\lambda _1p_1 + \lambda _2p_2 + \lambda _3p_3+\eta|<\varepsilon
\end{equation*}
and such that $p_3=x^2 + y^2 +1$.\\
\quad\\
\textbf{Keywords}:  Diophantine approximation, primes.\\
\quad\\
{\bf  2010 Math.\ Subject Classification}: 11D75  $\cdot$  11P32
\end{abstract}

\section{Notations}
\indent

The letter $p$ with or without subscript will always denote prime numbers.
We denote by $(m,n)$ the greatest common divisor of $m$ and $n$. Moreover $e(t)$=exp($2\pi it$).
As usual $\varphi(d)$ is Euler's function, $r(d)$ is the number of solutions of the equation
$d=m_1^2+m_2^2$ in integers $m_j$, $\chi(d)$ is the non-principal character modulo 4
and $L(s,\chi)$ is the corresponding Dirichlet's $L$ -- function.
We shall use the convention that a congruence, $m\equiv n\,\pmod {d}$
will be written as $m\equiv n\,(d)$.
We denote by $\lfloor t\rfloor$, $\lceil t\rceil$ and $\{t\}$ respectively the  floor function,
the ceiling function and the fractional part function of $t$.
Let $\lambda_1,\lambda_2,\lambda_3$ are non-zero real numbers,
not all of the same sign and $\lambda_1/\lambda_2$ is irrational.
Then there are infinitely many different convergents
$a_0/q_0$ to its continued fraction, with
\begin{equation}\label{lambda12a0q0}
\bigg|\frac{\lambda_1}{\lambda_2} - \frac{a_0}{q_0}\bigg|<\frac{1}{q_0^2}\,,\quad (a_0, q_0) = 1\,,\quad a_0\neq0
\end{equation}
and $q_0$ is arbitrary large.
Denote
\begin{align}
\label{X}
&q_0^2=\frac{X}{(\log X)^{22}}\,;\\
\label{D}
&D=\frac{X^{1/2}}{(\log X)^{52}}\,;\\
\label{Delta}
&\Delta=\frac{(\log X)^{23}}{X}\,;\\
\label{theta0}
&\theta_0=\frac{1}{2}-\frac{1}{4}e\log2=0.0289...;\\
\label{varepsilon}
&\varepsilon=\frac{(\log\log X)^7}{(\log X)^{\theta_0}}\,;\\
\label{H}
&H=\frac{\log^2X}{\varepsilon}\,;\\
\label{SldalphaX}
&S_{l,d;J}(\alpha, X)=\sum\limits_{p\in J\atop{p\equiv l\, (d)}} e(\alpha p)\log p
\,,\quad J\subset(\lambda_0X,X]\,, \quad 0<\lambda_{0}<1\,;\\
\label{SalphaX}
&S(\alpha, X)=S_{1,1;(\lambda_0X,X]}(\alpha, X)\,;\\
\label{IJalphaX}
&I_J(\alpha,X)=\int\limits_Je(\alpha y)\,dy\,;\\
\label{IalphaX}
&I(\alpha,X)=I_{(\lambda_0X,X]}(\alpha, X)\,;\\
\label{Exqa}
&E(x,q,a)=\sum_{p\le x\atop{p\equiv a\,(q)}} \, \log p-\frac{x}{\varphi(q)}\,.
\end{align}

\section{Introduction and statement of the result}
\indent

In 1960 Linnik \cite{Linnik} has proved that there exist infinitely many prime numbers of the form
$p=x^2 + y^2 +1$, where $x$ and $y$ -- integers. More precisely he has proved the asymptotic formula
\begin{equation*}
\sum_{p\leq X}r(p-1)=\pi\prod_{p>2}\bigg(1+\frac{\chi(p)}{p(p-1)}\bigg)\frac{X}{\log X}+
\mathcal{O}\bigg(\frac{X(\log\log X)^7}{(\log X)^{1+\theta_0}}\bigg)\,,
\end{equation*}
where $\theta_0$ is defined by \eqref{theta0}.

Seven  years later Baker \cite{Baker} showed that whenever
$\lambda_1,\lambda_2,\lambda_3$ are non-zero real numbers, not all of
the same sign, $\lambda_1/\lambda_2$ is irrational and $\eta$ is real,
then there are infinitely many prime triples $p_1,\,p_2,\,p_3$ such that
\begin{equation}\label{Inequality1}
|\lambda_1p_1+\lambda_2p_2+\lambda_3p_3+\eta|<\xi\,,
\end{equation}
where $\xi=(\log \max p_j)^{-A}$ and $A>0$ is an arbitrary large constant.

Latter the right-hand side of \eqref{Inequality1} was sharpened several times
and the best result up to now belongs to K. Matom\"{a}ki \cite{Matomaki}
with $\xi=( \max p_j)^{-2/9+\delta}$ and $\delta>0$.

After Matom\"{a}ki inequality \eqref{Inequality1} was solved
with prime numbers of a special form.

Let $P_l$ is a number with at most $l$ prime factors.
The author and Todorova \cite{Dimitrov4}, and the author \cite{Dimitrov2}
proved that \eqref{Inequality1} has a solution in primes $p_i$ such that
$p_i+2=P_l$,\;$i=1,\,2,\,3$.

Very recently the author \cite{Dimitrov3} showed that \eqref{Inequality1}
has a solution  in Piatetski-Shapiro primes $p_1,\,p_2,\,p_3$
of type $\gamma\in(37/38, 1)$.

In this paper we continue to solve inequality \eqref{Inequality1}
with prime numbers of a special type.
More precisely we shall prove solvability of \eqref{Inequality1}
with Linnik primes. Thus we establish the following theorem.
\begin{theorem}\label{Theorem}
Suppose that $\lambda_1,\lambda_2,\lambda_3$ are non-zero real numbers, not all of
the same sign, $\lambda_1/\lambda_2$ is irrational and $\eta$ is real.
Then there exist infinitely many triples of primes $p_1,\,p_2,\,p_3$ for which
\begin{equation*}
|\lambda_1p_1+\lambda_2p_2+\lambda_3p_3+\eta|<\frac{(\log\log \max p_j)^7}{(\log \max p_j)^{\theta_0}}
\end{equation*}
and such that $p_3=x^2 + y^2 +1$. Here $\theta_0$ is defined by \eqref{theta0}.
\end{theorem}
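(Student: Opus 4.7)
The approach is a Davenport--Heilbronn circle method weighted by $r(p_3-1)$ to enforce the Linnik condition on the third prime. Let $K(\alpha)=(\sin\pi\varepsilon\alpha/(\pi\alpha))^2$, whose inverse Fourier transform equals $\max(0,\varepsilon-|u|)$, and introduce the weighted Linnik exponential sum
\begin{equation*}
R(\alpha,X)=\sum_{\lambda_0X<p\le X}r(p-1)(\log p)\,e(\alpha p).
\end{equation*}
Setting
\begin{equation*}
\Gamma(X)=\int_{-\infty}^{\infty}S(\lambda_1\alpha,X)\,S(\lambda_2\alpha,X)\,R(\lambda_3\alpha,X)\,e(\eta\alpha)\,K(\alpha)\,d\alpha,
\end{equation*}
a lower bound $\Gamma(X)\gg X^2\varepsilon/\log X$ will, by positivity of the kernel's transform, force the existence of many triples $(p_1,p_2,p_3)$ satisfying both the inequality and $p_3=x^2+y^2+1$. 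Dissect $\mathbb{R}$ into the major arc $\mathfrak{M}=\{|\alpha|\le\Delta\}$, the minor arcs $\mathfrak{m}=\{\Delta<|\alpha|\le H\}$, and the trivial arc $\mathfrak{t}=\{|\alpha|>H\}$, with $\Delta$, $H$ as defined in the notation section.

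On $\mathfrak{M}$ I would approximate each $S(\lambda_i\alpha,X)$ by $I(\lambda_i\alpha,X)$ and $R(\lambda_3\alpha,X)$ by $\mathfrak{S}\cdot I(\lambda_3\alpha,X)$, where $\mathfrak{S}=\pi\prod_{p>2}(1+\chi(p)/(p(p-1)))$ is the Linnik constant in the displayed asymptotic. The error control comes from Siegel--Walfisz bounds on $E(x,q,a)$ for small $q$ combined with the expansion $r(p-1)=4\sum_{d\mid p-1}\chi(d)$: the cutoff $D=X^{1/2}(\log X)^{-52}$ is calibrated so that after exploiting the involution $d\leftrightarrow(p-1)/d$, the large-divisor range falls within the Bombieri--Vinogradov regime. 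Evaluating the surviving integral $\int_{\mathfrak{M}}I(\lambda_1\alpha)I(\lambda_2\alpha)I(\lambda_3\alpha)e(\eta\alpha)K(\alpha)\,d\alpha$ by standard arguments, and invoking the sign condition on the $\lambda_i$, produces the desired main term of size $\gg X^2\varepsilon/\log X$.

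The heart of the proof is the minor arc estimate. I would proceed by Cauchy--Schwarz, combining a sup-bound on one $S$-factor with $L^2$-bounds on the other two factors against the weight $K$. From \eqref{lambda12a0q0} and the Vinogradov--Vaughan treatment of $S$ adapted to the irrationality of $\lambda_1/\lambda_2$, one obtains $\sup_{\alpha\in\mathfrak{m}}|S(\lambda_i\alpha,X)|\ll X(\log X)^{-A}$ for any fixed $A$, while Gallagher's lemma supplies $\int|S(\lambda_j\alpha,X)|^2K(\alpha)\,d\alpha\ll\varepsilon X\log X$. The key novelty is the companion bound
\begin{equation*}
\int_{-\infty}^{\infty}|R(\lambda_3\alpha,X)|^2K(\alpha)\,d\alpha\ll\frac{\varepsilon X(\log X)(\log\log X)^{14}}{(\log X)^{2\theta_0}},
\end{equation*}
obtained by expanding $r(p-1)=4\sum_{d\mid p-1}\chi(d)$, splitting at $D$, and carrying Linnik's dispersion argument through the exponential weight so as to preserve the saving $(\log X)^{-\theta_0}$. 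The trivial arc is harmless: $|S|,|R|\ll X\log X$ together with $K(\alpha)\ll\alpha^{-2}$ gives a contribution $\ll X^3(\log X)^2/H$, which is negligible for $H=\log^2 X/\varepsilon$.

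I expect the principal obstacle to be precisely this mean-square estimate for $R(\alpha,X)$ on $\mathfrak{m}$: one must upgrade the Linnik asymptotic from a pointwise statement at $\alpha=0$ to a uniform exponential sum bound without losing any of the $\theta_0$ exponent. This is exactly what the carefully balanced choices of $D$, $\Delta$, $\varepsilon$, $H$ in the notation section are designed to accommodate, and it is also what limits the saving in the final inequality to $(\log X)^{-\theta_0}(\log\log X)^{7}$ rather than a power of $X$.
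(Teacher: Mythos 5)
Your overall architecture (smoothed Davenport--Heilbronn integral, the expansion $r(p_3-1)=4\sum_{d\mid p_3-1}\chi(d)$, major/minor/trivial arcs, Bombieri--Vinogradov for the small divisors, the pointwise minor-arc saving extracted from the convergents of $\lambda_1/\lambda_2$) matches the paper's skeleton, but the step you call the heart of the proof is not correct, and the true source of the exponent $\theta_0$ is missing. Your claimed estimate
\begin{equation*}
\int_{-\infty}^{\infty}|R(\lambda_3\alpha,X)|^2K(\alpha)\,d\alpha\ll\frac{\varepsilon X(\log X)(\log\log X)^{14}}{(\log X)^{2\theta_0}}
\end{equation*}
is false: since the Fourier transform of $K$ is nonnegative and equals $\varepsilon$ at the origin, the diagonal terms alone contribute $\gg\varepsilon\sum_{\lambda_0X<p\le X}r(p-1)^2\log^2 p\gg\varepsilon X\log X$ (Cauchy--Schwarz applied to Linnik's asymptotic already gives $\sum_{p\le X}r(p-1)^2\gg X/\log X$), which exceeds your bound by a factor of order $(\log X)^{2\theta_0}(\log\log X)^{-14}$; no dispersion argument can produce a $(\log X)^{-2\theta_0}$ saving in that mean square. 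Fortunately no such saving is needed on the minor arcs: an honest bound $\ll\varepsilon X(\log X)^{O(1)}$ there, combined with the fixed-power-of-$\log$ pointwise saving, already suffices --- note also that Lemma \ref{Expsumest} with $q\in[(\log X)^{22},X(\log X)^{-22}]$ yields only $X(\log X)^{-7}$, not $X(\log X)^{-A}$ for arbitrary $A$, and the saving for $\min(|S(\lambda_1t)|,|S(\lambda_2t)|)$ is available only along a sequence $X_j\to\infty$ supplied by the convergents, which is why the theorem asserts infinitely many triples rather than a statement for all large $X$.

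The genuine difficulty, which your sketch does not address, is the range of divisors $D<d<X/D$ of $p_3-1$: the involution $d\leftrightarrow(p_3-1)/d$ maps large divisors to small ones but leaves this middle window (a logarithmic-width neighbourhood of $\sqrt X$) invariant, and moduli of size about $\sqrt X$ lie outside the Bombieri--Vinogradov regime, so your assertion that after the involution everything falls within Bombieri--Vinogradov fails precisely there. The paper treats this contribution ($\Gamma_2(X)$) outside the circle method altogether: after Cauchy--Schwarz it reduces to $\sum_{p}\bigl|\sum_{d\mid p-1,\ D<d<X/D}\chi(d)\bigr|^2$ and to counting primes $p$ for which $p-1$ has a divisor near $\sqrt X$, handled by Hooley's Lemma \ref{Hooley1} and Lemma \ref{Hooley2}, while the number of pairs $(p_1,p_2)$ with $\lambda_1p_1+\lambda_2p_2$ confined to a short interval is bounded by the sieve estimate of Lemma \ref{Halberstam-Richert}. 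It is this bound, $\Gamma_2(X)\ll X^2(\log\log X)^6(\log X)^{-\theta_0}$, that dictates the choice $\varepsilon=(\log\log X)^7(\log X)^{-\theta_0}$ and hence the shape of the theorem; without an argument of this kind your proof cannot close.
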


\vspace{1mm}

In addition we have the following challenge.
\begin{conjecture}  Let $\varepsilon>0$ be a small constant.
Suppose that $\lambda_1,\lambda_2,\lambda_3$ are non-zero real numbers, not all of
the same sign, $\lambda_1/\lambda_2$ is irrational and $\eta$ is real.
Then there exist infinitely many triples of primes $p_1,\,p_2,\,p_3$ for which
\begin{equation*}
|\lambda_1p_1+\lambda_2p_2+\lambda_3p_3+\eta|<\varepsilon
\end{equation*}
and such that $p_1=x_1^2 + y_1^2 +1$,\,  $p_2=x_2^2 + y_2^2 +1$,\, $p_3=x_3^2 + y_3^2 +1$.
\end{conjecture}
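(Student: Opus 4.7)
The plan is to prove the theorem by a Davenport--Heilbronn style Fourier analysis in which the Linnik condition is imposed on $p_3$ by inserting the representation weight $r(p_3-1)$ and showing that it contributes a genuine main term. Fix a smooth majorant/minorant pair for the indicator of $(-\varepsilon,\varepsilon)$ whose Fourier transform $K(\alpha)$ satisfies $K(\alpha)\ll\min\bigl(\varepsilon,|\alpha|^{-1},\varepsilon^{-1}|\alpha|^{-2}\bigr)$; such kernels are standard in this area. With $\lambda_0$ chosen so that the triple $(\lambda_1,\lambda_2,\lambda_3)$ admits solutions with all $p_j\in(\lambda_0 X,X]$, consider
\begin{equation*}
\Gamma(X)=\sum_{\lambda_0 X<p_1,p_2,p_3\le X}r(p_3-1)(\log p_1)(\log p_2)(\log p_3)\,K(\lambda_1 p_1+\lambda_2 p_2+\lambda_3 p_3+\eta).
\end{equation*}
Using the identity $r(n)=4\sum_{d\mid n}\chi(d)$ and truncating at $d\le D$ in the Linnik fashion, write
\begin{equation*}
\Gamma(X)=\int_{-\infty}^{\infty}S(\lambda_1\alpha,X)\,S(\lambda_2\alpha,X)\,T(\lambda_3\alpha,X)\,e(\eta\alpha)\,K(\alpha)\,d\alpha+\text{(Linnik error)},
\end{equation*}
where $T(\beta,X)=4\sum_{d\le D}\chi(d)\,S_{1,d;J_d}(\beta,X)+\cdots$ is the exponential sum over Linnik primes with $J_d\subset(\lambda_0 X,X]$ adjusted in the usual way.

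Next I would dissect $\mathbb{R}$ into the four regions used by Baker and Matom\"aki: $E_1=[-\Delta,\Delta]$, $E_2=\{\alpha:|q_0\lambda_2\alpha-a_0\,\mathrm{sgn}(\lambda_1)q_0\cdots|\le\Delta\}\setminus E_1$ centred near the diophantine approximation dictated by \eqref{lambda12a0q0}, $E_3=[-H,H]\setminus(E_1\cup E_2)$, and $E_4=\{|\alpha|>H\}$. The contribution from $E_4$ is controlled trivially by the tail bound on $K$. On $E_1$, use the $L^2$ isometry or direct expansion to replace $S(\lambda_j\alpha,X)$ by $I(\lambda_j\alpha,X)$ (via the prime number theorem and partial summation) and replace $T(\lambda_3\alpha,X)$ by $c\,I(\lambda_3\alpha,X)$, where
\begin{equation*}
c=\pi\prod_{p>2}\bigg(1+\frac{\chi(p)}{p(p-1)}\bigg)
\end{equation*}
is the Linnik constant extracted from the asymptotic quoted in the introduction. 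This produces a main term of size $\gg c\,\varepsilon\,X^2(\log X)^{-1}$.

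The main obstacle is the treatment of $E_2\cup E_3$, where bilinear-sum estimates must be combined with the Linnik weight. On $E_3$, I would appeal to a Vinogradov/Vaughan minor-arc bound to show $\min_{j}|S(\lambda_j\alpha,X)|\ll X(\log X)^{-A}$ for a suitably large $A$, then use the mean-value bound
\begin{equation*}
\int_{-H}^{H}|S(\lambda_j\alpha,X)|^2|K(\alpha)|\,d\alpha\ll X\log X,
\end{equation*}
together with the analogous $L^2$ estimate for $T$, which follows from Linnik's formula via Parseval. On $E_2$, the key ingredient is that the Farey structure \eqref{lambda12a0q0} forces $S(\lambda_1\alpha,X)S(\lambda_2\alpha,X)$ to be small; this is the step where the irrationality of $\lambda_1/\lambda_2$ enters. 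The genuinely new technical point is to prove
\begin{equation*}
\int_{-H}^{H}|T(\lambda_3\alpha,X)|^2\,d\alpha\ll X\log X,
\end{equation*}
which requires an additive-energy estimate for Linnik primes: one breaks the $\chi$--twisted sum into Bombieri--Vinogradov type contributions and uses the bound $E(x,q,a)\ll x\varphi(q)^{-1}(\log x)^{-A}$ for $q\le D$, exactly as in Linnik's own argument, upgraded to the exponential-sum setting along the lines of Tolev and the author's previous work on Linnik primes.

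Finally, assembling the four pieces yields $\Gamma(X)\gg c\,\varepsilon\,X^2(\log X)^{-1}$, which dominates the corresponding upper-bound majorant sum and, after dropping the weight $r(p_3-1)(\log p_1)(\log p_2)(\log p_3)\le C(\log X)^3$ on the solution set, produces at least $\gg\varepsilon X^2(\log X)^{-4}$ admissible prime triples with $p_3=x^2+y^2+1$ in $(\lambda_0 X,X]$. Letting $q_0\to\infty$ through convergents gives infinitely many such triples, and with the choice of $\varepsilon$ in \eqref{varepsilon} the inequality takes the form stated.
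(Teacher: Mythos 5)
The statement you are proving is the paper's \emph{Conjecture}, not its Theorem, and the paper itself offers no proof of it: the author explicitly poses it as an open problem (``we have the following challenge \dots\ The author wishes success to all young researchers in attacking this hard hypothesis''). So there is no proof in the paper to compare yours against; the only question is whether your sketch actually establishes the conjectured statement. It does not.

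The conjecture requires \emph{all three} primes to be of the form $x_i^2+y_i^2+1$, i.e.\ the weight in $\Gamma(X)$ must be $r(p_1-1)\,r(p_2-1)\,r(p_3-1)$. Your proposal inserts only $r(p_3-1)$ and carries the two unconstrained sums $S(\lambda_1\alpha,X)$, $S(\lambda_2\alpha,X)$ throughout; your own concluding sentence produces triples ``with $p_3=x^2+y^2+1$'', which is exactly the paper's Theorem, not the Conjecture. The distinction is not cosmetic: the argument you outline leans essentially on having two \emph{free} prime exponential sums. On the intermediate range $E_2\cup E_3$ the saving comes from $\min_j|S(\lambda_j\alpha,X)|\ll X(\log X)^{-A}$ via the rational approximation \eqref{lambda12a0q0} to $\lambda_1/\lambda_2$, paired with $L^2$ mean-value bounds for the other two factors; if $p_1$ and $p_2$ must also be Linnik primes, both of those factors become $\chi$-twisted sums of the form $T(\lambda_j\alpha,X)$, and no analogue of Lemma \ref{Expsumest} for such twisted sums is available in the paper or supplied by you. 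Likewise the ``middle divisor'' error (the $\Gamma_2$-type term controlled by Hooley's Lemmas \ref{Hooley1} and \ref{Hooley2}) would have to be run simultaneously in the variables $p_1-1$, $p_2-1$, $p_3-1$; each application only saves a power $(\log X)^{-\theta_0}$ with $\theta_0\approx 0.029$, and it is not clear the combined main term survives three such losses together with the $\varepsilon=(\log\log X)^7(\log X)^{-\theta_0}$ shrinkage. These are precisely the obstructions that make the statement a conjecture rather than a theorem, and your sketch does not engage with them.
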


The author wishes success to all young researchers in attacking this hard hypothesis.

\section{Preliminary lemmas}
\indent

\begin{lemma}\label{Fourier} Let $\varepsilon>0$ and $k\in \mathbb{N}$.
There exists a function $\theta(y)$ which is $k$ times continuously differentiable and
such that
\begin{align*}
&\theta(y)=1\quad\quad\quad\mbox{for }\quad\quad|y|\leq 3\varepsilon/4\,;\\
&0<\theta(y)<1\quad\mbox{for}\quad3\varepsilon/4 <|y|< \varepsilon\,;\\
&\theta(y)=0\quad\quad\quad\mbox{for}\quad\quad|y|\geq \varepsilon\,.
\end{align*}
and its Fourier transform
\begin{equation*}
\Theta(x)=\int\limits_{-\infty}^{\infty}\theta(y)e(-xy)dy
\end{equation*}
satisfies the inequality
\begin{equation*}
|\Theta(x)|\leq\min\bigg(\frac{7\varepsilon}{4},\frac{1}{\pi|x|},\frac{1}{\pi |x|}
\bigg(\frac{k}{2\pi |x|\varepsilon/8}\bigg)^k\bigg)\,.
\end{equation*}
\end{lemma}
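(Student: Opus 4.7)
Following Vinogradov's classical recipe, my construction would take $\theta$ as an iterated convolution of characteristic functions of intervals. Set $a=\varepsilon/(8k)$ and let
\[
\eta_a(y)=\frac{1}{2a}\mathbf{1}_{[-a,a]}(y),\qquad \psi(y)=\mathbf{1}_{[-7\varepsilon/8,\,7\varepsilon/8]}(y).
\]
Then I would define
\[
\theta=\psi*\underbrace{\eta_a*\cdots*\eta_a}_{k\text{ factors}},
\]
possibly with one or two additional $\eta_a$-convolutions to reach exactly $k$ continuous derivatives, since a $(k+1)$-fold convolution of indicator functions is a B-spline of class $C^{k-1}$ (each convolution with a box buys one more derivative).

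The three pointwise properties would then follow directly from the convolution structure. The support of $\eta_a^{*k}$ lies in $[-ka,ka]=[-\varepsilon/8,\varepsilon/8]$, so $\mathrm{supp}\,\theta\subset[-\varepsilon,\varepsilon]$. For $|y|\leq 3\varepsilon/4$ and every $t\in[-\varepsilon/8,\varepsilon/8]$ one has $|y-t|\leq 7\varepsilon/8$, hence $\psi(y-t)=1$, giving $\theta(y)=\int\eta_a^{*k}=1$. On the transition band $3\varepsilon/4<|y|<\varepsilon$ only a proper sub-interval of the support of $\eta_a^{*k}$ contributes, so $0<\theta(y)<1$. The nonnegativity of $\theta$ as a convolution of nonnegative functions, combined with $\int\theta=(\int\psi)(\int\eta_a^{*k})=7\varepsilon/4$, gives immediately the trivial Fourier estimate $|\Theta(x)|\leq 7\varepsilon/4$.

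For the two refined bounds I would invoke the convolution theorem to write
\[
\Theta(x)=\hat{\psi}(x)\bigl(\hat{\eta}_a(x)\bigr)^k=\frac{\sin(7\pi x\varepsilon/4)}{\pi x}\left(\frac{\sin(2\pi a x)}{2\pi a x}\right)^k.
\]
The bound $|\Theta(x)|\leq 1/(\pi|x|)$ is then clear from $|\hat{\psi}(x)|\leq 1/(\pi|x|)$ together with $|\hat{\eta}_a|\leq 1$. For the third bound I would use the estimate $|\hat{\eta}_a(x)|\leq 1/(2\pi a|x|)=k/(2\pi|x|\varepsilon/8)$ in each of the $k$ sinc factors and multiply by the same bound $1/(\pi|x|)$ on $|\hat\psi|$.

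The only genuine subtlety is matching the number of convolution factors against the required order of smoothness so that the constants $7/8$, $1/8$, $k$ and $\varepsilon/8$ line up exactly as in the statement; this is pure bookkeeping and does not affect the shape of the Fourier bound. No deeper obstacle is expected.
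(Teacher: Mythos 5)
The paper offers no proof of this lemma at all: it simply cites Piatetski-Shapiro, and the construction behind that citation is precisely the one you propose, namely $\theta=\psi*\eta_a^{*k}$ with $\psi=\mathbf{1}_{[-7\varepsilon/8,\,7\varepsilon/8]}$ and $k$ normalized boxes of half-width $a=\varepsilon/(8k)$. With exactly $k$ smoothing factors your verification is correct: the support, plateau and transition-band properties follow as you say (using that $\eta_a^{*k}>0$ on the interior of its support), and the three Fourier bounds follow from $|\hat\psi(x)|\leq\min\left(7\varepsilon/4,\,1/(\pi|x|)\right)$, $|\hat\eta_a(x)|\leq\min\left(1,\,1/(2\pi a|x|)\right)$ and the identity $1/(2\pi a|x|)=k/(2\pi|x|\varepsilon/8)$.

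The one point you should not wave off as pure bookkeeping is the smoothness count. A convolution of $m$ boxes is only $(m-2)$-times continuously differentiable, so $\psi*\eta_a^{*k}$ is $C^{k-1}$, one derivative short of the literal statement; and your proposed remedy of inserting one or two extra $\eta$-factors cannot recover the exact stated constant, because the half-widths of all smoothing factors must still sum to $\varepsilon/8$ (otherwise the support leaves $[-\varepsilon,\varepsilon]$ or the plateau shrinks below $3\varepsilon/4$). With $k+1$ factors the same argument only yields $\frac{1}{\pi|x|}\bigl(\frac{k+1}{2\pi|x|\varepsilon/8}\bigr)^{k}$ (or the exponent $k+1$ with the larger numerator), and for $k=1$ one can check that the three-box, $C^{1}$ function genuinely violates the stated bound near $2\pi|x|\varepsilon/8=2$, so the discrepancy is not removable by re-tuning widths. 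Thus you must either keep exactly $k$ factors and accept $k-1$ continuous derivatives with the exact bound, or gain the extra derivative and weaken the constant to $k+1$. Either variant is entirely sufficient for this paper — only the pointwise properties and the decay of $\Theta$ with the choice $k=[\log X]$ are used (Section 6), where replacing $k$ by $k+1$ changes nothing — but you should state which version you prove rather than assert that the constants "line up exactly".
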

\begin{proof}
See (\cite{Shapiro}).
\end{proof}

\begin{lemma}\label{SIasympt} Let $|\alpha|\leq\Delta$.
Then for the sum denoted by \eqref{SalphaX} and the integral denoted by \eqref{IalphaX}
the asymptotic formula
\begin{equation*}
S(\alpha,X)=I(\alpha,X)+\mathcal{O}\left(\frac{X}{e^{(\log X)^{1/5}}}\right)
\end{equation*}
holds.
\end{lemma}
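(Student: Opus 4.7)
The plan is to reduce this to the classical Prime Number Theorem with a power-saving error term, via Abel (partial) summation, which is the standard method for converting a von Mangoldt-type exponential sum at a small frequency into the corresponding integral.

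First I would introduce the Chebyshev function $\theta(u)=\sum_{p\le u}\log p$ and write
\begin{equation*}
S(\alpha,X)=\int_{\lambda_0 X}^{X} e(\alpha u)\,d\theta(u).
\end{equation*}
Setting $R(u)=\theta(u)-u$, the PNT in its classical Vinogradov--Korobov form (even the weaker bound $R(u)\ll u\,e^{-c_1(\log u)^{1/2}}$ is plenty for us) gives the error-term estimate. Splitting the Stieltjes integral as $d\theta(u)=du+dR(u)$ yields
\begin{equation*}
S(\alpha,X)=I(\alpha,X)+\int_{\lambda_0 X}^{X} e(\alpha u)\,dR(u).
\end{equation*}

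Next I would integrate by parts in the remainder integral:
\begin{equation*}
\int_{\lambda_0 X}^{X} e(\alpha u)\,dR(u)=\bigl[e(\alpha u)R(u)\bigr]_{\lambda_0 X}^{X}-2\pi i\alpha\int_{\lambda_0 X}^{X} e(\alpha u)R(u)\,du.
\end{equation*}
The boundary terms are bounded trivially by $|R(u)|\ll X\,e^{-c_1(\log X)^{1/2}}$. The integral term is dominated by
\begin{equation*}
|\alpha|\cdot X\cdot\max_{\lambda_0 X\le u\le X}|R(u)|\ll \Delta\cdot X\cdot X\,e^{-c_1(\log X)^{1/2}}=(\log X)^{23}\,X\,e^{-c_1(\log X)^{1/2}},
\end{equation*}
where I used $|\alpha|\le\Delta=(\log X)^{23}/X$.

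Finally I would verify that this is comfortably inside the target error. Since $(\log X)^{1/2}$ dominates $(\log X)^{1/5}+23\log\log X$, the polynomial factor $(\log X)^{23}$ is absorbed and both contributions are $\ll X/e^{(\log X)^{1/5}}$, which matches the stated bound. The only ``obstacle'' is purely bookkeeping: making sure the harmless factor $|\alpha|\le\Delta$ picked up during integration by parts does not inflate the error past the target, and this is automatic because the PNT error term is sub-exponential while $\Delta\cdot X$ is merely polylogarithmic in $X$. No deeper input than the classical PNT with power-saving error is needed.
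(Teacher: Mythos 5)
Your proof is correct and follows essentially the same route as the paper, which simply defers to Tolev's Lemma 14 — itself established by exactly this partial summation against $\theta(u)-u$ together with the prime number theorem with an exponential-type error term, the factor $|\alpha|\le\Delta$ being harmless as you note. Nothing further is needed.
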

\begin{proof}
Arguing as in (\cite{Tolev}, Lemma 14) we establish Lemma \ref{SIasympt}.
\end{proof}

\begin{lemma}\label{Bomb-Vin}(Bombieri -- Vinogradov) For any $C>0$ the following inequality
\begin{equation*}
    \sum\limits_{q\le X^{\frac{1}{2}}/(\log X)^{C+5}}
    \max\limits_{y\le X}\max\limits_{(a,\,q)=1}
    \big|E (y,\,q,\,a)\big|\ll
    \frac{X}{(\log X)^{C}}
\end{equation*}
holds.
\end{lemma}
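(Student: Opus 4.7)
This is the classical Bombieri--Vinogradov theorem, and I would establish it by the standard route combining Vaughan's identity with the multiplicative large sieve, following the exposition in Davenport's \emph{Multiplicative Number Theory}. The argument splits into three phases: reduction to primitive characters, treatment of small moduli by Siegel--Walfisz, and treatment of the remaining moduli by a bilinear-sum estimate.

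First, using orthogonality of Dirichlet characters and observing that the principal-character contribution essentially cancels the main term $y/\varphi(q)$, I would rewrite
\[
E(y,q,a) = -\frac{1}{\varphi(q)} \sum_{\chi \neq \chi_0 \bmod q} \overline{\chi}(a)\,\psi(y,\chi) + O(\log^{2} X),
\]
where $\psi(y,\chi)=\sum_{n\le y}\Lambda(n)\chi(n)$ and the error absorbs prime-power and ramified-prime contributions. Replacing each $\chi$ by the primitive character $\chi^{*}$ mod $d$ it is induced by, and interchanging the order of summation in $q$ and $d$, reduces the lemma to the bound
\[
\sum_{d \le Q} \frac{\log(2Q/d)}{\varphi(d)} \sideset{}{^*}\sum_{\chi \bmod d} \max_{y \le X} |\psi(y,\chi)| \ll \frac{X}{(\log X)^{C}},
\]
with $Q = X^{1/2}/(\log X)^{C+5}$ and $\sum^{*}$ restricted to primitive characters.

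Second, I would split the range at $Q_{0} = (\log X)^{B}$ for a suitably large $B = B(C)$. For $d \le Q_{0}$, the Siegel--Walfisz theorem gives $\psi(y,\chi) \ll X\exp(-c\sqrt{\log X})$ uniformly, which disposes of the small-moduli contribution with room to spare. For $Q_{0} < d \le Q$, I would apply Vaughan's identity to decompose $\Lambda$ into Type I and Type II pieces with the usual parameters of order $X^{1/3}$. After removing the maximum over $y$ via a dyadic subdivision followed by Abel summation, the Type I sums reduce to linear character sums controlled by Polya--Vinogradov, while the Type II sums take the bilinear shape $\sum_{m\sim M,\,n\sim N} a_{m}b_{n}\chi(mn)$ with $MN \asymp X$ and $M,N \gg X^{1/3}$. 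To these I would apply Cauchy--Schwarz in $\chi$ together with the multiplicative large sieve
\[
\sum_{d \le Q} \frac{d}{\varphi(d)} \sideset{}{^*}\sum_{\chi \bmod d} \Big|\sum_{n \le N} c_{n}\chi(n)\Big|^{2} \ll (Q^{2} + N)\sum_{n \le N}|c_{n}|^{2}.
\]

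The principal technical obstacle is the uniform handling of the inner $\max_{y \le X}$ inside the character sum: removing it by dyadic slicing and partial summation costs a power of $\log X$, and this is precisely what forces the loss $(\log X)^{C+5}$ rather than $(\log X)^{C}$ in the admissible range of moduli. Once this maximum is dispatched, balancing the Type I and Type II contributions against one another (with the large sieve supplying square-root cancellation over both $d$ and $\chi$) yields the stated saving $X/(\log X)^{C}$.
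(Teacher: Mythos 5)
Your proposal is correct and follows exactly the route the paper relies on: the paper proves this lemma simply by citing Davenport, Chapter 28, and your sketch (Vaughan's identity, Siegel--Walfisz for small moduli, the multiplicative large sieve for the bilinear pieces, with the maximum over $y$ removed by partial summation) is precisely that standard argument. The only cosmetic difference is that the paper's $E(x,q,a)$ is defined with $\log p$ over primes rather than $\psi(y;q,a)$, but the discrepancy from prime powers is $O(X^{1/2}\log X)$ per modulus and harmless.
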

\begin{proof}
See (\cite{Davenport}, Ch.28).
\end{proof}

\begin{lemma}\label{Expsumest}
Suppose that $\alpha \in \mathbb{R}$,\, $a \in \mathbb{Z}$,\, $q\in \mathbb{N}$,\,
$\big|\alpha-\frac{a}{q}\big|\leq\frac{1}{q^2}$\,, $(a, q)=1$.

Let
\begin{equation*}
\Sigma(\alpha,X)=\sum\limits_{p\le X}e(\alpha p)\log p\,.
\end{equation*}
Then
\begin{equation*}
\Sigma(\alpha, X)\ll \Big(Xq^{-1/2}+X^{4/5}+X^{1/2}q^{1/2}\Big)\log^4X\,.
\end{equation*}
\end{lemma}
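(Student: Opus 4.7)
The plan is to establish this as a standard consequence of Vaughan's identity applied to the von Mangoldt function. First I would pass from $\Sigma(\alpha,X)$ to $T(\alpha,X)=\sum_{n\le X}\Lambda(n)e(\alpha n)$, since the difference consists of prime powers $p^k$ with $k\ge 2$, which contribute only $O(X^{1/2}\log X)$ and are absorbed by the claimed bound. The problem is thereby reduced to showing $T(\alpha,X)\ll (Xq^{-1/2}+X^{4/5}+X^{1/2}q^{1/2})\log^4 X$.

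Next I would apply Vaughan's identity with parameters $U=V=X^{2/5}$, writing $\Lambda$ as a sum of convolutions so that $T(\alpha,X)$ splits into four pieces $T_1,T_2,T_3,T_4$. Two of these are Type I sums of the shape $\sum_{m\le M}a_m\sum_{n\le N/m}e(\alpha mn)$ with $M\le X^{2/5}$ and $mn\le X$, while the remaining piece is a Type II (bilinear) sum $\sum_{m}\sum_{n}a_m b_n e(\alpha mn)$ with $X^{2/5}< m\le X^{3/5}$ and coefficients $a_m,b_n\ll \log X$. For the Type I pieces I would sum the inner geometric progression to obtain $\min(N/m,\|\alpha m\|^{-1})$ and use the classical estimate
\begin{equation*}
\sum_{m\le M}\min\!\left(\frac{Y}{m},\frac{1}{\|\alpha m\|}\right)\ll\left(\frac{X}{q}+M+q\right)\log(2qX),
\end{equation*}
valid under the rational approximation hypothesis on $\alpha$. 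For the Type II piece I would apply Cauchy--Schwarz in $m$, then expand the square in $n$ and reduce to bounding $\sum_{n_1,n_2}\min(X/(n_1 n_2),\|\alpha(n_1-n_2)n\|^{-1})$, which after another application of the same one-variable lemma yields a saving of $q^{-1/2}$ together with error terms $X^{4/5}$ and $X^{1/2}q^{1/2}$.

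Combining the Type I and Type II estimates yields the stated bound with the three-term shape, the factor $\log^4 X$ arising naturally from the logarithms in the divisor coefficients produced by Vaughan's identity together with the $\log(2qX)$ factor from the min-sum lemma. The main (and only nontrivial) obstacle is the careful bookkeeping in the Type II sum: one must ensure that the Cauchy--Schwarz step does not lose more than $\log^2 X$, and that the resulting double sum is treated via a dyadic decomposition in the shift $h=n_1-n_2$ so that the min-sum lemma applies uniformly. Everything else is mechanical verification, and the final rearrangement of the error terms produces exactly $Xq^{-1/2}+X^{4/5}+X^{1/2}q^{1/2}$ up to logarithmic factors.
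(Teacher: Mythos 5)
Your proposal is correct and follows exactly the standard Vaughan-identity argument (Type I sums via the $\sum_m \min(Y/m,\|\alpha m\|^{-1})$ lemma, Type II sums via Cauchy--Schwarz, with $U=V=X^{2/5}$), which is precisely how the result you are asked to prove is established in the literature; the paper itself gives no argument but simply cites Iwaniec--Kowalski, Theorem 13.6, whose proof is the one you sketched. The passage from $\sum_{p\le X}e(\alpha p)\log p$ to $\sum_{n\le X}\Lambda(n)e(\alpha n)$ costs only $O(X^{1/2}\log X)$, which is indeed absorbed by the $X^{4/5}$ term, so no step in your outline is problematic.
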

\begin{proof}
See (\cite{Iwaniec-Kowalski}, Theorem 13.6).
\end{proof}

\begin{lemma}\label{Halberstam-Richert} Let $k\in\mathbb N$; $l,a,b\in\mathbb Z$ and $ab\neq0$.
Let $x$ and $y$ be real numbers satisfying
\begin{equation*}
k<y\leq x\,.
\end{equation*}
Then
\begin{equation*}
\#\{p\,:x-y<p\leq x,\,p\equiv l\,(k),\,ap+b=p'\}
\end{equation*}
\begin{equation*}
\ll\prod\limits_{p\mid kab}\bigg(1-\frac{1}{p}\bigg)^{-1}\frac{y}{\varphi(k)\log^2(y/k)}\,.
\end{equation*}
\end{lemma}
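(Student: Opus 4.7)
This is a classical upper bound sieve estimate, and my plan is to obtain it from Selberg's $\Lambda^2$-sieve applied to the sifting set
$$\mathcal{A} = \{n : x - y < n \leq x,\; n \equiv l \pmod{k}\},$$
imposing the sieve condition that $n(an+b)$ has no prime factor below a threshold $z$ near $y^{1/2}$. Any prime $p$ counted on the left-hand side either has $p \leq z$ or $|ap+b| \leq z$ (contributing only $O(z)$ exceptions) or else $n = p$ is an element of $\mathcal{A}$ meeting the sieve condition. Hence the left-hand side is bounded by $S(\mathcal{A}, z) + O(z)$.

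For each prime $p$, let $\omega(p)$ be the number of residue classes modulo $p$ compatible with $n \equiv l \pmod{k}$ that satisfy $n(an+b) \equiv 0 \pmod{p}$. A direct check gives $\omega(p) = 2$ for $p \nmid 2kab$, with roots $n \equiv 0$ and $n \equiv -b\bar a \pmod{p}$, and $\omega(p) \leq 1$ for $p \mid 2kab$. This makes the problem a dimension-$2$ sifting problem, and Selberg's sieve with level of distribution $D \asymp y/k$ and $z = (y/k)^{1/2 - \delta}$ yields
$$S(\mathcal{A}, z) \ll \frac{y/k}{V(z)}, \qquad V(z) = \prod_{p < z}\Bigl(1 - \frac{\omega(p)}{p}\Bigr),$$
with an acceptable remainder coming from the Bombieri-style main term of the sieve.

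To extract the stated shape I split $V(z)^{-1}$ according to whether $p \mid kab$ or not. By Mertens' theorem, the part over $p \nmid 2kab$ is comparable to $\prod_{p < z}(1-1/p)^{-2} \asymp \log^2(y/k)$, while the part over $p \mid kab$ contributes $\prod_{p \mid kab}(1-1/p)^{-1}$ up to a bounded constant. Combining with the identity $k/\varphi(k) = \prod_{p \mid k}(1-1/p)^{-1}$ to turn $1/k$ into $1/\varphi(k)$ and absorbing the resulting extra factor into the product over $p \mid kab$ produces exactly the asserted bound.

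The only mildly delicate point is the bookkeeping of the local densities when $\gcd(k, ab) > 1$, which only shuffles finitely many Euler factors; the sieve-theoretic mechanism is entirely standard. One may alternatively cite Halberstam and Richert's upper bound theorem for dimension-two sieves, of which this lemma is a standard specialization.
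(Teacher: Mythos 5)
The paper does not actually prove this lemma: it simply cites Halberstam--Richert, \emph{Sieve Methods}, Ch.~2, Corollary 2.4.1, and your Selberg $\Lambda^2$ sketch is the standard dimension-two sieve argument behind that citation. However, as written your derivation does not produce the stated bound, because the sieve main term is inverted. Selberg's sieve gives $S(\mathcal A,z)\le X/G(z)+\mathrm{error}$ with $G(z)\asymp V(z)^{-1}$, i.e. $S(\mathcal A,z)\ll X\,V(z)\asymp (y/k)\prod_{p<z}\bigl(1-\omega(p)/p\bigr)$, not $\ll (y/k)/V(z)$. Since you then evaluate $V(z)^{-1}\asymp \log^2(y/k)\prod_{p\mid kab}(1-1/p)^{-1}$ by Mertens, your displayed chain yields a bound of the shape $(y/k)\log^2(y/k)\prod_{p\mid kab}(1-1/p)^{-1}$, which is weaker than the trivial estimate $y/k+1$ and is not the lemma; the $\log^2(y/k)$ must land in the denominator, which happens exactly when you bound $V(z)$, not $V(z)^{-1}$, from above. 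The repair is routine but must be done in that direction: $V(z)\ll \log^{-2}(y/k)\prod_{p\mid k}(1-1/p)^{-2}\prod_{p\mid ab,\ p\nmid k}(1-1/p)^{-1}$, where the factor $(1-1/p)^{-2}$ for $p\mid k$ arises because generically $\omega(p)=0$ there, and one of those two powers is then cancelled by writing $1/k=\varphi(k)^{-1}\prod_{p\mid k}(1-1/p)$, leaving precisely $\prod_{p\mid kab}(1-1/p)^{-1}\,y/\bigl(\varphi(k)\log^2(y/k)\bigr)$.

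A second, smaller inaccuracy: the claim that $\omega(p)\le 1$ for every $p\mid 2kab$ fails at $p=2$ when $a$ and $b$ are both odd (then $n(an+b)$ is always even, so $\omega(2)=2=p$ and $V(z)=0$), and the setup likewise degenerates when $\gcd(a,b)>1$, or when $p\mid(k,l)$ or $p\mid(k,al+b)$. These cases should be disposed of separately by noting that the left-hand side is then $O(1)$, which is admissible because the right-hand side always exceeds $1$ (since $\log^2 t<t$ for $t>1$). With these corrections your argument is exactly the upper-bound sieve that Corollary 2.4.1 of Halberstam and Richert encapsulates, so the paper's choice to quote that result directly is also a legitimate (and shorter) route.
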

\begin{proof}
See (\cite{Halberstam}, Ch.2, Corollary 2.4.1).
\end{proof}
The next two lemmas are due to C. Hooley.
\begin{lemma}\label{Hooley1}
For any constant $\omega>0$ we have
\begin{equation*}
\sum\limits_{p\leq X}\bigg|\sum\limits_{d|p-1\atop{\sqrt{X}(\log X)^{-\omega}<d<\sqrt{X}(\log X)^{\omega}}}
\chi(d)\bigg|^2\ll \frac{X(\log\log X)^7}{\log X}\,,
\end{equation*}
where the constant in the Vinogradov symbol depends on $\omega>0$.
\end{lemma}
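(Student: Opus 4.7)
The plan is to expand the square, convert the resulting sum over pairs of divisors into a sum over arithmetic progressions, and then combine the Brun--Titchmarsh inequality with cancellation coming from $\chi$.

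Put $I=(\sqrt X(\log X)^{-\omega},\sqrt X(\log X)^{\omega})$. Since $\chi$ is real-valued, expansion of the square gives
\begin{equation*}
\sum_{p\le X}\bigg|\sum_{\substack{d\mid p-1\\ d\in I}}\chi(d)\bigg|^2
=\sum_{d_1,d_2\in I}\chi(d_1)\chi(d_2)\,\#\{p\le X:p\equiv 1\,([d_1,d_2])\}.
\end{equation*}
Using complete multiplicativity of $\chi$, I would parametrise each pair by $\delta=(d_1,d_2)$, $d_1=\delta a$, $d_2=\delta b$ with $(a,b)=1$, so that $[d_1,d_2]=\delta ab$ and $\chi(d_1)\chi(d_2)=\chi(\delta)^2\chi(a)\chi(b)$. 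The factor $\chi(\delta)^2$ vanishes unless $\delta$ is odd, in which case it equals $1$, so only odd $\delta$ contribute.

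For the diagonal $a=b=1$, Brun--Titchmarsh yields
\begin{equation*}
\sum_{d\in I}\#\{p\le X:p\equiv 1\,(d)\}\ll\frac{X}{\log X}\sum_{d\in I}\frac{1}{\varphi(d)}\ll\frac{X\log\log X}{\log X},
\end{equation*}
using $\sum_{Y_1<d\le Y_2}\varphi(d)^{-1}\ll\log(Y_2/Y_1)$ and the fact that $I$ has logarithmic width $\asymp\log\log X$. This already matches the claimed bound up to the power of $\log\log X$.

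For the off-diagonal contribution ($ab>1$) a crude $|\chi(a)\chi(b)|\le 1$ would overshoot the target, so the sign of $\chi$ must be exploited. I would decompose the ranges of $\delta$, $a$, $b$ into dyadic blocks, apply Brun--Titchmarsh when $[d_1,d_2]\le X^{1-\kappa}$ and the trivial bound $\#\{p\le X:p\equiv 1\,(q)\}\le X/q+1$ for larger $q$, and then handle the inner sums over $a$ and $b$ by Abel summation together with $\sum_{n\le Y}\chi(n)\ll 1$, which comes from $L(1,\chi)=\pi/4$. The main obstacle is the balancing in this regime: the Brun--Titchmarsh weight $\varphi([d_1,d_2])^{-1}\log^{-1}(X/[d_1,d_2])$ favours large moduli, whereas character cancellation in $\sum\chi(a)/a$ is most effective on long ranges of $a$, hence on small $\delta$. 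A careful stratification of $(\delta,a,b)$--space is required to apply the appropriate bound in each regime; the accumulated logarithmic losses from the dyadic decomposition ultimately produce the specific exponent $7$ in Hooley's original treatment.
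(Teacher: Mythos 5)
The paper does not actually prove this lemma: it only points to Hooley's book (Ch.~5), where the corresponding middle-range estimate is carried out, so the benchmark here is whether your argument stands on its own. It does not. Your expansion of the square, the parametrisation $d_1=\delta a$, $d_2=\delta b$, and the diagonal bound via Brun--Titchmarsh are fine (the diagonal indeed gives $\ll X\log\log X/\log X$). But the off-diagonal contribution is the entire content of the lemma, and for it you offer only a plan whose key step is left open: you say ``a careful stratification of $(\delta,a,b)$-space is required'' and attribute the exponent $7$ to ``Hooley's original treatment''. That is a deferral, not a proof.

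Moreover, the mechanism you propose for extracting cancellation cannot work as described. With absolute values, Brun--Titchmarsh plus the trivial bound gives for the off-diagonal roughly $X(\log\log X)^{O(1)}$ (the moduli are $[d_1,d_2]=d_1d_2/\delta\asymp X/\delta$ up to powers of $\log X$, the inner sums over $a,b$ each contribute $\asymp\log\log X$, and $\sum_\delta 1/(\delta\log 2\delta)\ll\log\log X$), which exceeds the target $X(\log\log X)^7/\log X$ by essentially a full factor of $\log X$; in fact the statement with $\chi$ replaced by $1$ is false at this level, so cancellation is indispensable, as you note. However, Abel summation with $\sum_{n\le Y}\chi(n)\ll 1$ (which, incidentally, follows from the periodicity and mean zero of $\chi$ mod $4$, not from $L(1,\chi)=\pi/4$) requires the weight attached to $a$ to be smooth or of controlled variation, whereas here the weight is $\#\{p\le X:\ p\equiv 1\,(\delta ab)\}$, an irregular arithmetic function of the modulus. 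To convert it into something against which $\chi(a)\chi(b)$ can oscillate you would need an asymptotic formula for primes in progressions to moduli of size about $X/\delta$, far beyond the Bombieri--Vinogradov range for the dominant (small $\delta$) part of the sum. Overcoming precisely this obstacle is what Linnik's dispersion method, and Hooley's sieve-based replacement of it in Ch.~5 of \cite{Hooley}, were invented for; your sketch neither reproduces such an argument nor cites it as the source of the estimate, so the proof has a genuine gap at its central point.
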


\begin{lemma}\label{Hooley2} Suppose that $\omega>0$ is a constant
and let $\mathcal{F}_\omega(X)$ be the number of primes $p\leq X$
such that $p-1$ has a divisor in the interval $\big(\sqrt{X}(\log X)^{-\omega}, \sqrt{X}(\log X)^\omega\big)$.
Then
\begin{equation*}
\mathcal{F}_\omega(X)\ll\frac{X(\log\log X)^3}{(\log X)^{1+2\theta_0}}\,,
\end{equation*}
where $\theta_0$ is defined by \eqref{theta0} and the constant in the Vinogradov symbol depends only on $\omega>0$.
\end{lemma}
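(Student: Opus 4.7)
The plan is to follow Hooley's classical strategy: stratify the primes in $\mathcal{F}_\omega(X)$ by the number of prime factors of a witnessing divisor, apply a sieve upper bound to count primes in the resulting arithmetic progressions, and then optimize over the stratification parameter. Note that if $d\mid p-1$ with $d\in I:=(\sqrt{X}(\log X)^{-\omega},\sqrt{X}(\log X)^{\omega})$, then the co-divisor $d'=(p-1)/d$ also lies in an interval of comparable length near $\sqrt{X}$; by symmetry one may assume the witnessing divisor satisfies $\Omega(d)\leq\tfrac12\Omega(p-1)+O(1)$.

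First I would write $\mathcal{F}_\omega(X)\leq\sum_{k}T_k$, where
\[
T_k=\#\bigl\{p\leq X:\exists\,d\mid p-1,\ d\in I,\ \Omega(d)=k\bigr\},
\]
and apply the Brun--Titchmarsh inequality (handling the genuinely small-$d$ range via Lemma \ref{Bomb-Vin}) to obtain
\[
T_k\ll\frac{X}{\log X}\sum_{d\in I,\ \Omega(d)=k}\frac{1}{\varphi(d)}.
\]
Next, bound the inner sum by the standard Hardy--Ramanujan type estimate for $\Omega$-restricted reciprocal-$\varphi$ sums over a short interval. This yields a bound of the shape $T_k\ll(X/(\log X)^2)\cdot C^{k}(\log\log X)^{k-1}/(k-1)!$ for an absolute constant $C$, where the extra $1/\log X$ comes from the short length of $I$.

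Second, I would sum $\sum_k T_k$ and optimize. Restricting to $k\lesssim\tfrac12\Omega(p-1)$ and penalizing primes with $\Omega(p-1)$ far from its normal order $\log\log X$ via another Hardy--Ramanujan bound, the dominant contribution is concentrated at $k\approx\tfrac{e}{2}\log\log X$. Stirling's formula converts the maximum of $C^{k}(\log\log X)^{k}/k!$ at this $k$ into a factor $(\log X)^{(e\log 2)/2}$; combined with the baseline $X/(\log X)^2$ this produces $X/(\log X)^{1+2\theta_0}$ with $\theta_0=\tfrac12-(e\log 2)/4$. The residual bookkeeping from the window length $(\log X)^{2\omega}$ and the polynomial correction in Hardy--Ramanujan accounts for the factor $(\log\log X)^{3}$.

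The main obstacle is the precise optimization in the final step: every multiplicative constant arising from Stirling, from the window of length $(\log X)^{2\omega}$, and from the Hardy--Ramanujan correction must be tracked so that the exponent of $\log X$ comes out to be exactly $1+2\theta_0$ and the polylog exponent to be exactly $3$. This is the genuinely subtle part of Hooley's argument; the sieve-theoretic and Bombieri--Vinogradov inputs, which reduce the counting problem to estimating $\sum_{d}1/\varphi(d)$ over restricted divisor classes, are comparatively routine once the earlier lemmas of this section are in place.
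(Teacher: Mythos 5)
You should first note that the paper contains no proof of this lemma at all: it simply points to Hooley's book (Ch.\ 5), and your outline follows exactly that Erd\H{o}s--Hooley strategy (stratify by the number of prime factors of a witnessing divisor near $\sqrt{X}$, apply a sieve bound in the resulting progressions, optimize with Stirling). So the architecture is the right one. The genuine gap is that the step which actually produces the exponent $1+2\theta_0$ is the step you leave undone, and the intermediate bounds as you state them would not deliver it. The bound $T_k\ll (X/(\log X)^2)\,C^{k}(\log\log X)^{k-1}/(k-1)!$ with an unspecified absolute constant $C$ is too lossy: at the relevant $k\asymp\log\log X$ the factor $C^{k}$ is a positive power of $\log X$ whenever $C>1$, and the optimization then yields an exponent strictly smaller than $2\theta_0$. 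What is required is the bound with constant exactly $1$ multiplying $\log\log X$, namely $\sum_{d\in I,\,\Omega(d)=k}1/\varphi(d)\ll k\log\log X\,(\log\log X+O(1))^{k-1}/\big((k-1)!\,\log X\big)$, obtained by splitting off the largest prime factor of $d$ (which is $\geq d^{1/k}$) and applying Mertens on the short window; only an additive $O(1)$ shift is tolerable. Likewise the ``Hardy--Ramanujan bound'' you invoke for primes with $\Omega(p-1)$ abnormally large is itself a nontrivial sieve statement, $\#\{p\le X:\Omega(p-1)=m\}\ll X(\log\log X+O(1))^{m-1}/\big((m-1)!\,(\log X)^2\big)$ uniformly for $m$ up to about $\tfrac{e}{2}\log\log X$, proved by detaching a large prime factor of $p-1$ and using a two-dimensional sieve of the type of Lemma \ref{Halberstam-Richert} (with care for powers of $2$); a cruder tail device such as the moment $\sum_{p\le X}2^{\Omega(p-1)}$ provably falls short of $2\theta_0$, because the threshold it forces makes the main part too large. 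With the correct inputs the two parts balance at $\Omega(p-1)\approx\tfrac{e}{2}\log\log X$, hence $k\approx\tfrac{e}{4}\log\log X$ for the witnessing divisor (your ``$k\approx\tfrac{e}{2}\log\log X$'' conflates the divisor with $p-1$, although, coincidentally, both values give the same factor $(\log X)^{(e\log 2)/2}$), and each part is $\ll X(\log X)^{-2+(e\log 2)/2}=X(\log X)^{-1-2\theta_0}$ up to powers of $\log\log X$.

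Two smaller points. The appeal to Lemma \ref{Bomb-Vin} is misplaced: every divisor in question has size $\sqrt{X}(\log X)^{\pm\omega}$, so there is no small-$d$ range, and since only an upper bound is needed Brun--Titchmarsh suffices throughout (Bombieri--Vinogradov barely reaches these moduli anyway). Also the symmetry step $d\mapsto(p-1)/d$ needs $p\gg X(\log X)^{-A}$ so that the co-divisor again lies in a middle window with a slightly enlarged $\omega$; the smaller primes must first be discarded trivially. None of this changes the skeleton, but without the precise forms of the two sieve inputs and the explicit optimization, the exponent $1+2\theta_0$ and the power $(\log\log X)^3$ are asserted rather than proved.
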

The proofs of very similar results are available in (\cite{Hooley}, Ch.5).

\section{Outline of the proof}
\indent

Consider the sum
\begin{equation}\label{Gamma}
\Gamma(X)= \sum\limits_{\lambda_0X<p_1,p_2,p_3\leq X\atop{|\lambda_1p_1+\lambda_2p_2+\lambda_3p_3+\eta|<\varepsilon}}
r(p_3-1)\log p_1\log p_2\log p_3\,.
\end{equation}
Any non-trivial lower bound of $\Gamma(X)$ implies solvability of
$|\lambda_1p_1+\lambda_2p_2+\lambda_3p_3+\eta|<\varepsilon$ in primes such that $p_3=x^2 + y^2 +1$.

We have
\begin{equation}\label{GammaGamma0}
\Gamma(X)\geq\Gamma_0(X)\,,
\end{equation}
where
\begin{equation}\label{Gamma0}
\Gamma_0(X)=\sum\limits_{\lambda_0X<p_1,p_2,p_3\leq X}r(p_3-1)
\theta(\lambda_1p_1+\lambda_2p_2+\lambda_3p_3+\eta)\log p_1 \log p_2\log p_3\,.
\end{equation}
Using \eqref{Gamma0} and well-known identity $r(n)=4\sum_{d|n}\chi(d)$ we write
\begin{equation} \label{Gamma0decomp}
\Gamma_0(X)=4\big(\Gamma_1(X)+\Gamma_2(X)+\Gamma_3(X)\big),
\end{equation}
where
\begin{align}
\label{Gamma1}
&\Gamma_1(X)=\sum\limits_{\lambda_0X<p_1,p_2,p_3\leq X}
\left(\sum\limits_{d|p_3-1\atop{d\leq D}}\chi(d)\right)
\theta(\lambda_1p_1+\lambda_2p_2+\lambda_3p_3+\eta)\log p_1\log p_2\log p_3\,,\\
\label{Gamma2}
&\Gamma_2(X)=\sum\limits_{\lambda_0X<p_1,p_2,p_3\leq X}
\left(\sum\limits_{d|p_3-1\atop{D<d<X/D}}\chi(d)\right)
\theta(\lambda_1p_1+\lambda_2p_2+\lambda_3p_3+\eta)\log p_1\log p_2\log p_3\,,\\
\label{Gamma3}
&\Gamma_3(X)=\sum\limits_{\lambda_0X<p_1,p_2,p_3\leq X}
\left(\sum\limits_{d|p_3-1\atop{d\geq X/D}}\chi(d)\right)
\theta(\lambda_1p_1+\lambda_2p_2+\lambda_3p_3+\eta)\log p_1\log p_2\log p_3\,.
\end{align}
In order to estimate $\Gamma_1(X)$ and $\Gamma_3(X)$ we have to consider
the sum
\begin{equation} \label{Ild}
I_{l,d;J}(X)=\sum\limits_{\lambda_0X<p_1,p_2\leq X\atop{p_3\equiv l\,(d)
\atop{p_3\in J}}}\theta(\lambda_1p_1+\lambda_2p_2+\lambda_3p_3+\eta)\log p_1\log p_2\log p_3\,,
\end{equation}
where $d$ and $l$ are coprime natural numbers, and $J\subset(\lambda_0X,X]$-interval.
If $J=(\lambda_0X,X]$ then we write for simplicity $I_{l,d}(X)$.

Using the inverse Fourier transform for the function $\theta(x)$ we get
\begin{align*}
I_{l,d;J}(X)&=\sum\limits_{\lambda_0X<p_1,p_2\leq X\atop{p_3\equiv l\,(d)\atop{p_3\in J}}}\log p_1\log p_2\log p_3
\int\limits_{-\infty}^{\infty}\Theta(t)e\big((\lambda_1p_1+\lambda_2p_2+\lambda_3p_3+\eta)t\big)\,dt\\
&=\int\limits_{-\infty}^{\infty}\Theta(t)S(\lambda_1t,X)S(\lambda_2t,X)S_{l,d;J}(\lambda_3t,X)e(\eta t)\,dt\,.
\end{align*}
We decompose $I_{l,d;J}(X)$ as follows
\begin{equation}\label{Ilddecomp}
I_{l,d;J}(X)=I^{(1)}_{l,d;J}(X)+I^{(2)}_{l,d;J}(X)+I^{(3)}_{l,d;J}(X)\,,
\end{equation}
where
\begin{align}
\label{Ild1}
&I^{(1)}_{l,d;J}(X)=\int\limits_{|t|<\Delta}\Theta(t)S(\lambda_1t,X)S(\lambda_2t,X)S_{l,d;J}(\lambda_3t,X)e(\eta t)\,dt\,,\\
\label{Ild2}
&I^{(2)}_{l,d;J}(X)=\int\limits_{\Delta\leq|t|\leq H}\Theta(t)S(\lambda_1t,X)S(\lambda_2t,X)S_{l,d;J}(\lambda_3t,X)e(\eta t)\,dt\,,\\
\label{Ild3}
&I^{(3)}_{l,d;J}(X)=\int\limits_{|t|>H}\Theta(t)S(\lambda_1t,X)S(\lambda_2t,X)S_{l,d;J}(\lambda_3t,X)e(\eta t)\,dt\,.
\end{align}
We shall estimate $I^{(1)}_{l,d;J}(X)$, $I^{(3)}_{l,d;J}(X)$,
$\Gamma_3(X),\,\Gamma_2(X)$ and $\Gamma_1(X)$, respectively,
in the sections \ref{SectionIld1}, \ref{SectionIld3},
\ref{SectionGamma3}, \ref{SectionGamma2} and \ref{SectionGamma1}.
In section \ref{Sectionfinal} we shall complete the proof of Theorem \ref{Theorem}.

\section{Asymptotic formula for $\mathbf{I^{(1)}_{l,d;J}(X)}$}\label{SectionIld1}
\indent

Replace
\begin{align}
\label{S1}
&S_1=S(\lambda_1t, X)\,,\\
\label{S2}
&S_2=S(\lambda_2t, X)\,, \\
\label{S3}
&S_3=S_{l,d;J}(\lambda_3t,X)\,,\\
\label{I1}
&I_1=I(\lambda_1t, X)\,,\\
\label{I2}
&I_2=I(\lambda_2t, X)\,, \\
\label{I3}
&I_3=\frac{1}{\varphi(d)}I_J(\lambda_3t, X)\,.
\end{align}
We use the identity
\begin{equation}\label{Identity}
S_1S_2S_3=I_1I_2I_3+(S_1-I_1)I_2I_3+S_1(S_2-I_2)I_3+S_1S_2(S_3-I_3)\,.
\end{equation}

From \eqref{Delta}, \eqref{SldalphaX}, \eqref{IJalphaX}, \eqref{Exqa}, \eqref{S3}, \eqref{I3}
and Abel's summation formula it follows
\begin{equation}\label{S3I3}
S_3=I_3+\mathcal{O}\bigg(\Delta X\max\limits_{y\in(\lambda_{0}X,X]}\big|E(y, d, l)\big|\bigg)\,.
\end{equation}
Now using  \eqref{SalphaX} -- \eqref{IalphaX}, \eqref{S1} -- \eqref{S3I3}, Lemma \ref{SIasympt}
and the trivial estimations
\begin{equation*}
S_1, S_2, I_2\ll X \,, \quad I_3\ll \frac{X}{\varphi(d)}
\end{equation*}
we get
\begin{equation}\label{S123I123}
S_1S_2S_3-I_1I_2I_3\ll X^3\Bigg(\frac{1}{\varphi(d)e^{(\log X)^{1/5}}}
+\Delta\max\limits_{y\in(\lambda_{0}X,X]}\big|E(y,d,l)\big|\Bigg)\,.
\end{equation}
Put
\begin{equation}\label{PhiX}
\Phi(X)=\frac{1}{\varphi(d)}\int\limits_{|t|<\Delta}\Theta(t)I(\lambda_1t,X)I(\lambda_2t,X)I_J(\lambda_3t,X)e(\eta t)\,dt\,.
\end{equation}
Taking into account   \eqref{Ild1},   \eqref{S123I123}, \eqref{PhiX} and Lemma \ref{Fourier} we find
\begin{equation}\label{Ild1-PhiX}
I^{(1)}_{l,d;J}(X)-\Phi(X)\ll
\varepsilon\Delta X^3\Bigg(\frac{1}{\varphi(d)e^{(\log X)^{1/5}}}
+\Delta\max\limits_{y\in(\lambda_{0}X,X]}\big|E(y,d,l)\big|\Bigg)\,.
\end{equation}
On the other hand for the integral defined by \eqref{PhiX} we write
\begin{equation}\label{JXest1}
\Phi(X)=\frac{1}{\varphi(d)}B_J(X)+\Omega\,,
\end{equation}
where
\begin{equation*}
B_J(X)=\int\limits_J\int\limits_{\lambda_0X}^{X}\int\limits_{\lambda_0X}^{X}
\theta(\lambda_1y_1+\lambda_2y_2+\lambda_3y_3+\eta)\,dy_1\,dy_2\,dy_3
\end{equation*}
and
\begin{equation}\label{Omega}
\Omega\ll\frac{1}{\varphi(d)}\int\limits_{\Delta}^{\infty }|\Theta(t)|
|I(\lambda_1t,X)I(\lambda_2t,X)I_J(\lambda_3t,X)|\,dt\,.
\end{equation}
By  \eqref{IJalphaX} and \eqref{IalphaX} we get
\begin{equation}\label{IalphaXest}
I_J(\alpha,X)\ll\frac{1}{|\alpha|}\,, \quad  I(\alpha,X)\ll\frac{1}{|\alpha|}\,.
\end{equation}
Using \eqref{Omega}, \eqref{IalphaXest} and Lemma \ref{Fourier} we deduce
\begin{equation}\label{Omegaest}
\Omega\ll\frac{\varepsilon}{\varphi(d)\Delta^2}\,.
\end{equation}
Bearing in mind \eqref{Delta}, \eqref{Ild1-PhiX}, \eqref{JXest1} and \eqref{Omegaest} we find
\begin{equation}\label{Ild1est}
I^{(1)}_{l,d;J}(X)=\frac{1}{\varphi(d)}B_J(X)
+\varepsilon\Delta^2 X^3\max\limits_{y\in(\lambda_{0}X,X]}\big|E(y,d,l)\big|
+\frac{\varepsilon}{\varphi(d)\Delta^2}\,.
\end{equation}

\section{Upper bound  of $\mathbf{I^{(3)}_{l,d;J}(X)}$}\label{SectionIld3}
\indent

By \eqref{SldalphaX}, \eqref{SalphaX}, \eqref{Ild3} and Lemma \ref{Fourier} it follows
\begin{equation}\label{Ild3est1}
I^{(3)}_{l,d;J}(X)\ll \frac{X^3\log X}{d}\int\limits_{H}^{\infty}\frac{1}{t}\bigg(\frac{k}{2\pi t\varepsilon/8}\bigg)^k \,dt
=\frac{X^3\log X}{dk}\bigg(\frac{4k}{\pi\varepsilon H}\bigg)^k\,.
\end{equation}
Choosing $k=[\log X]$ from \eqref{H} and \eqref{Ild3est1} we obtain
\begin{equation}\label{Ild3est2}
I^{(3)}_{l,d;J}(X)\ll\frac{1}{d}\,.
\end{equation}

\section{Upper bound of $\mathbf{\Gamma_3(X)}$}\label{SectionGamma3}
\indent

Consider the sum  $\Gamma_3(X)$.\\
Since
\begin{equation*}
\sum\limits_{d|p_3-1\atop{d\geq X/D}}\chi(d)=\sum\limits_{m|p_3-1\atop{m\leq (p_3-1)D/X}}
\chi\bigg(\frac{p_3-1}{m}\bigg)
=\sum\limits_{j=\pm1}\chi(j)\sum\limits_{m|p_3-1\atop{m\leq (p_3-1)D/X
\atop{\frac{p_3-1}{m}\equiv j\;(\textmd{mod}\,4)}}}1
\end{equation*}
then from \eqref{Gamma3} and \eqref{Ild} it follows
\begin{equation*}
\Gamma_3(X)=\sum\limits_{m<D\atop{2|m}}\sum\limits_{j=\pm1}\chi(j)I_{1+jm,4m;J_m}(X)\,,
\end{equation*}
where $J_m=\big(\max\{1+mX/D,\lambda_0X\},X\big]$.
The last formula and \eqref{Ilddecomp} yield
\begin{equation}\label{Gamma3decomp}
\Gamma_3(X)=\Gamma_3^{(1)}(X)+\Gamma_3^{(2)}(X)+\Gamma_3^{(3)}(X)\,,
\end{equation}
where
\begin{equation}\label{Gamma3i}
\Gamma_3^{(i)}(X)=\sum\limits_{m<D\atop{2|m}}\sum\limits_{j=\pm1}\chi(j)
I_{1+jm,4m;J_m}^{(i)}(X)\,,\;\; i=1,\,2,\,3.
\end{equation}

\subsection{Estimation of $\mathbf{\Gamma_3^{(1)}(X)}$}
\indent

First we consider $\Gamma_3^{(1)}(X)$. From \eqref{Ild1est} and \eqref{Gamma3i} we deduce
\begin{align}\label{Gamma31}
\Gamma_3^{(1)}(X)=\Gamma^*&+\mathcal{O}\Big(\varepsilon\Delta^2 X^3\Sigma_1\Big)
+\mathcal{O}\bigg(\frac{\varepsilon}{\Delta^2}\Sigma_2\bigg)\,,
\end{align}
where
\begin{align}
\label{Gamma*}
&\Gamma^*=B_J(X)\sum\limits_{m<D\atop{2|m}}\frac{1}{\varphi(4m)}\sum\limits_{j=\pm1}\chi(j)\,,\\
\label{Sigma1}
&\Sigma_1=\sum\limits_{m<D\atop{2|m}}\max\limits_{y\in(\lambda_{0}X,X]}\big|E(y,4m,1+jm)\big|\,,\\
\label{Sigma2}
&\Sigma_2=\sum\limits_{m<D}\frac{1}{\varphi(4m)}\,.
\end{align}
From the properties of $\chi(k)$ we have that
\begin{equation}\label{Gamma*est}
\Gamma^*=0\,.
\end{equation}
By \eqref{D}, \eqref{Sigma1} and Lemma \ref{Bomb-Vin} we get
\begin{equation}\label{Sigma1est}
\Sigma_1\ll\frac{X}{(\log X)^{47}}\,.
\end{equation}
It is well known that
\begin{equation}\label{Sigma2est}
\Sigma_2\ll \log X\,.
\end{equation}
Bearing in mind \eqref{Delta}, \eqref{Gamma31}, \eqref{Gamma*est}, \eqref{Sigma1est} and \eqref{Sigma2est} we obtain
\begin{equation}\label{Gamma31est}
\Gamma_3^{(1)}(X)\ll\frac{\varepsilon X^2}{\log X}\,.
\end{equation}

\subsection{Estimation of $\mathbf{\Gamma_3^{(2)}(X)}$}
\indent

Next we consider $\Gamma_3^{(2)}(X)$. From \eqref{Ild2} and \eqref{Gamma3i} we have
\begin{equation}\label{Gamma32}
\Gamma_3^{(2)}(X)=\int\limits_{\Delta\leq|t|\leq H}\Theta(t)
S(\lambda_1t,X)S(\lambda_2t ,X)K(\lambda_3 t, X)e(\eta t)\,dt\,,
\end{equation}
where
\begin{equation}\label{Klambda3X}
K(\lambda_3t, X)=\sum\limits_{m<D\atop{2|m}}\sum\limits_{j=\pm1}\chi(j)S_{1+jm,4m;J_m}(\lambda_3t)\,.
\end{equation}
Suppose that
\begin{equation}\label{alphaaq}
\bigg|\alpha -\frac{a}{q}\bigg|\leq\frac{1}{q^2}\,,\quad (a, q)=1
\end{equation}
with
\begin{equation}\label{Intq}
q\in\left[(\log X)^{22},\,\frac{X}{(\log X)^{22}}\right]\,.
\end{equation}
Then \eqref{SalphaX}, \eqref{alphaaq}, \eqref{Intq} and  Lemma \ref{Expsumest} give us
\begin{equation}\label{Salphaest}
S(\alpha,\,X)\ll \frac{X}{(\log X)^7}\,.
\end{equation}
Let
\begin{equation}\label{mathfrakS}
\mathfrak{S}(t,X)=\min\left\{\left|S(\lambda_{1}t,\,X)\right|,\left|S(\lambda_2 t,\,X)\right|\right\}\,.
\end{equation}
Using  \eqref{lambda12a0q0}, \eqref{Salphaest}, \eqref{mathfrakS} and working similarly to  (\cite{Dimitrov4}, Lemma 6)
we establish that there exists a sequence of real numbers $X_1,\,X_2,\ldots \to \infty $ such that
\begin{equation}\label{mathfrakSest}
\mathfrak{S}(t, X_j)\ll \frac{X_j}{(\log X_j)^7}\,,\;\; j=1,2,\dots\,.
\end{equation}
Using \eqref{Gamma32}, \eqref{mathfrakS}, \eqref{mathfrakSest} and Lemma \ref{Fourier} we obtain
\begin{align}\label{Gamma32est1}
\Gamma_3^{(2)}(X_j)&\ll\varepsilon\int\limits_{\Delta\leq|t|\leq H}\mathfrak{S}(t, X_j)
\Big(\big|S(\lambda_1 t, X_j)K(\lambda_3 t, X_j)\big|
+\big|S(\lambda_2 t, X_j)K(\lambda_3 t, X_j)\big|\Big)\,dt\nonumber\\
&\ll\varepsilon\int\limits_{\Delta\leq|t|\leq H}\mathfrak{S}(t, X_j)
\Big(\big|S(\lambda_1 t, X_j)\big|^2
+\big|S(\lambda_2 t, X_j)\big|^2+\big|K(\lambda_3 t, X_j)\big|^2\Big)\,dt\nonumber\\
&\ll\varepsilon \frac{X_j}{(\log X_j)^7}\big(T_1+T_2+T_3\big)\,,
\end{align}
where
\begin{align}
\label{Tk}
&T_k=\int\limits_{\Delta}^H\big|S(\lambda_k t, X_j)\big|^2\,dt\,,\; k=1,2,\\
\label{T3}
&T_3=\int\limits_{\Delta}^H\big|K(\lambda_3 t, X_j)\big|^2\,dt\,.
\end{align}
From \eqref{Delta}, \eqref{H}, \eqref{SalphaX}, \eqref{Tk} and after straightforward computations  we get
\begin{equation}\label{Tkest}
T_k\ll HX_j\log X_j\,,\; k=1,2\,.
\end{equation}
Taking into account \eqref{Delta}, \eqref{H}, \eqref{Klambda3X}, \eqref{T3}
and proceeding as in (\cite{Dimitrov1},  p. 14) we find
\begin{equation}\label{T3est}
T_3\ll HX_j\log^3X_j\,.
\end{equation}
By \eqref{varepsilon}, \eqref{H}, \eqref{Tkest} and \eqref{T3est} we deduce
\begin{equation}\label{Gamma32est2}
\Gamma_3^{(2)}(X_j)\ll \frac{X_j}{(\log X_j)^7}X_j\log^5X_j
=\frac{X^2_j}{(\log X_j)^2}\ll\frac{\varepsilon X_j^2}{\log X_j}\,.
\end{equation}

\subsection{Estimation of $\mathbf{\Gamma_3^{(3)}(X)}$}
\indent

From \eqref{Ild3est2} and \eqref{Gamma3i} we have
\begin{equation}\label{Gamma33est}
\Gamma_3^{(3)}(X)\ll\sum\limits_{m<D}\frac{1}{d}\ll \log X\,.
\end{equation}

\subsection{Estimation of $\mathbf{\Gamma_3(X)}$}
\indent

Summarizing \eqref{Gamma3decomp}, \eqref{Gamma31est}, \eqref{Gamma32est2} and \eqref{Gamma33est} we get
\begin{equation}\label{Gamm3est}
\Gamma_3(X_j)\ll\frac{\varepsilon X_j^2}{\log X_j}\,.
\end{equation}

\section{Upper bound of $\mathbf{\Gamma_2(X)}$}\label{SectionGamma2}
\indent

Consider the sum $\Gamma_2(X)$. We denote by $\mathcal{F}(X)$ the set of all primes
$\lambda_0X<p\leq X$ such that $p-1$ has a divisor belongs to the interval $(D,X/D)$.
The inequality $xy\leq x^2+y^2$ and  \eqref{Gamma2} yield
\begin{align*}
\Gamma_2(X)^2&\ll(\log X)^6\sum\limits_{\lambda_0X<p_1,...,p_6\leq X
\atop{|\lambda_1p_1+\lambda_2p_2+\lambda_3p_3+\eta|<\varepsilon
\atop{|\lambda_1p_4+\lambda_2p_5+\lambda_3p_6+\eta|<\varepsilon}}}
\left|\sum\limits_{d|p_3-1\atop{D<d<X/D}}\chi(d)\right|
\left|\sum\limits_{t|p_6-1\atop{D<t<X/D}}\chi(t)\right|\\
&\ll(\log X)^6\sum\limits_{\lambda_0X<p_1,...,p_6\leq X
\atop{|\lambda_1p_1+\lambda_2p_2+\lambda_3p_3+\eta|<\varepsilon
\atop{\lambda_1p_4+\lambda_2p_5+\lambda_3p_6+\eta|<\varepsilon
\atop{p_6\in\mathcal{F}(X)}}}}\left|\sum\limits_{d|p_3-1
\atop{D<d<X/D}}\chi(d)\right|^2\,.
\end{align*}
The summands in the last sum for which $p_3=p_6$ can be estimated with
$\mathcal{O}\big(X^{3+\varepsilon}\big)$.\\
Therefore
\begin{equation}\label{Gamma2est1}
\Gamma_2(X)^2\ll(\log X)^6\Sigma_0+X^{3+\varepsilon}\,,
\end{equation}
where
\begin{equation}\label{Sigma0}
\Sigma_0=\sum\limits_{\lambda_0X<p_3\leq X}\left|\sum\limits_{d|p_3-1
\atop{D<d<X/D}}\chi(d)\right|^2\sum\limits_{\lambda_0X<p_6\leq X\atop{p_6\in\mathcal{F}
\atop{p_6\neq p_1}}}\sum\limits_{\lambda_0X<p_1,p_2,p_4,p_5\leq X
\atop{|\lambda_1p_1+\lambda_2p_2+\lambda_3p_3+\eta|<\varepsilon
\atop{|\lambda_1p_4+\lambda_2p_5+\lambda_3p_6+\eta|<\varepsilon}}}1\,.
\end{equation}
Since $\lambda_1,\lambda_2,\lambda_3$ are not all of the same sign,
without loss of generality we can assume that
$\lambda_1>0,\,\lambda_2>0$ and $\lambda_3<0$.
Now let us consider the set
\begin{equation}\label{SetPsi}
\Psi(X)=\{\langle p_1,p_2\rangle\;:\; |\lambda_1p_1+\lambda_2p_2+D|<\varepsilon,
\;\;\lambda_0X<p_1,p_2\leq X,\;\;D\asymp X\}\,.
\end{equation}
We shall find the upper bound of the cardinality of $\Psi(X)$.

Using
\begin{equation}\label{binaryinequality}
|\lambda_1p_1+\lambda_2p_2+D|<\varepsilon
\end{equation}
we write
\begin{equation}\label{p1p2D}
\left|\frac{\lambda_1}{\lambda_2}p_1+p_2+\frac{D}{\lambda_2}\right|<\frac{\varepsilon}{\lambda_2}\,.
\end{equation}
Since $\lambda_2$ is fixed then for sufficiently large $X$ we have that $\varepsilon/\lambda_2$
is sufficiently small. Therefore \eqref{p1p2D} implies

\textbf{Case 1.}
\begin{equation*}
\left\lfloor\frac{\lambda_1}{\lambda_2}p_1+p_2\right\rfloor=\left\lfloor-\frac{D}{\lambda_2}\right\rfloor
\end{equation*}
or

\textbf{Case 2.}
\begin{equation*}
\left\lceil\frac{\lambda_1}{\lambda_2}p_1+p_2\right\rceil=\left\lceil-\frac{D}{\lambda_2}\right\rceil
\end{equation*}
or

\textbf{Case 3.}

\begin{equation*}
\left\lfloor\frac{\lambda_1}{\lambda_2}p_1+p_2\right\rfloor=\left\lceil-\frac{D}{\lambda_2}\right\rceil
\end{equation*}
or

\textbf{Case 4.}

\begin{equation*}
\left\lceil\frac{\lambda_1}{\lambda_2}p_1+p_2\right\rceil=\left\lfloor-\frac{D}{\lambda_2}\right\rfloor\,.
\end{equation*}
We shall consider only the Case 1. The   Cases 2, 3 and 4 are treated similarly.

From Case 1 we have
\begin{equation*}
\left\lfloor\frac{\lambda_1}{\lambda_2}p_1\right\rfloor+p_2=\left\lfloor-\frac{D}{\lambda_2}\right\rfloor
\end{equation*}
thus
\begin{equation*}
\left\lfloor\left(\left\lfloor\frac{\lambda_1}{\lambda_2}\right\rfloor
+\left\{\frac{\lambda_1}{\lambda_2}\right\}\right)p_1\right\rfloor
+p_2=\left\lfloor-\frac{D}{\lambda_2}\right\rfloor
\end{equation*}
and therefore
\begin{equation}\label{Equality1}
\left\lfloor\frac{\lambda_1}{\lambda_2}\right\rfloor p_1
+p_2=\left\lfloor-\frac{D}{\lambda_2}\right\rfloor
-\left\lfloor\left\{\frac{\lambda_1}{\lambda_2}  p_1 \right\}\right\rfloor
\end{equation}
Bearing in mind the definition \eqref{SetPsi} we deduce that there exist
constants $C_1>0$ and $C_2>0$ such that
\begin{equation*}
C_1X\leq\left\lfloor-\frac{D}{\lambda_2}\right\rfloor
-\left\lfloor\left\{\frac{\lambda_1}{\lambda_2}  p_1 \right\}\right\rfloor\leq C_2X\,.
\end{equation*}
Consequently there exists constant $C\in[C_1, C_2]$ such that
\begin{equation}\label{Equality2}
\left\lfloor-\frac{D}{\lambda_2}\right\rfloor
-\left\lfloor\left\{\frac{\lambda_1}{\lambda_2}  p_1 \right\}\right\rfloor= CX\,.
\end{equation}
The equalities \eqref{Equality1} and \eqref{Equality2} give us
\begin{equation}\label{Equations}
\left\lfloor\frac{\lambda_1}{\lambda_2}\right\rfloor p_1+p_2=CX\,,
\end{equation}
for some constant $C\in[C_1, C_2]$.

We established that the number of solutions of the inequality \eqref{binaryinequality}
is less than the number of all solutions of all equations denoted by \eqref{Equations}.
According to Lemma \ref{Halberstam-Richert} for any fixed $C\in[C_1, C_2]$
participating in \eqref{Equations} we have
\begin{equation}\label{p1p2CX}
\#\{\langle p_1,p_2\rangle\;:\; \left\lfloor\lambda_1/\lambda_2\right\rfloor p_1+p_2=CX,
\;\;\lambda_0X<p_1,p_2\leq X\}\ll\frac{X\log\log X}{\log^2X}\,.
\end{equation}
Taking into account that $C\leq C_2$ from \eqref {SetPsi}, \eqref{binaryinequality} and \eqref{p1p2CX} we find
\begin{equation}\label{Psiest}
\#\Psi(X)\ll\frac{X\log\log X}{\log^2X}\,.
\end{equation}
The estimations \eqref{Sigma0} and \eqref{Psiest}  yield
\begin{equation}\label{Sigma0est}
\Sigma_0\ll\frac{X^2}{\log^4X}(\log\log X)^2\Sigma^\prime\Sigma^{\prime\prime}\,,
\end{equation}
where
\begin{equation*}
\Sigma^\prime=\sum\limits_{\lambda_0X<p\leq X}\left|\sum\limits_{d|p-1
\atop{D<d<X/D}}\chi(d)\right|^2\,,\quad \Sigma^{\prime\prime}=
\sum\limits_{\lambda_0X<p\leq X\atop{p\in\mathcal{F}}}1\,.
\end{equation*}
Applying Lemma \ref{Hooley1} we obtain
\begin{equation}\label{Sigma'est}
\Sigma^\prime\ll\frac{X(\log\log X)^7}{\log X}\,.
\end{equation}
Using Lemma \ref{Hooley2} we get
\begin{equation}\label{Sigma''est}
\Sigma^{\prime\prime}\ll\frac{X(\log\log X)^3}{(\log X)^{1+2\theta_0}}\,,
\end{equation}
where $\theta_0$ is denoted by  \eqref{theta0}.

We are now in a good position to estimate the sum $\Gamma_2(X)$.
From \eqref{Gamma2est1}, \eqref{Sigma0est} -- \eqref{Sigma''est} it follows
\begin{equation}\label{Gamma2est2}
\Gamma_2(X)\ll\frac{ X^2(\log\log X)^6}{(\log X)^{\theta_0}}=\frac{\varepsilon X^2}{\log\log X}\,.
\end{equation}

\section{Lower bound  for $\mathbf{\Gamma_1(X)}$}\label{SectionGamma1}
\indent

Consider the sum $\Gamma_1(X)$.
From \eqref{Gamma1}, \eqref{Ild} and \eqref{Ilddecomp} we deduce
\begin{equation}\label{Gamma1decomp}
\Gamma_1(X)=\Gamma_1^{(1)}(X)+\Gamma_1^{(2)}(X)+\Gamma_1^{(3)}(X)\,,
\end{equation}
where
\begin{equation}\label{Gamma1i}
\Gamma_1^{(i)}(X)=\sum\limits_{d\leq D}\chi(d)I_{1,d}^{(i)}(X)\,,\;\; i=1,\,2,\,3.
\end{equation}

\subsection{Estimation of $\mathbf{\Gamma_1^{(1)}(X)}$}
\indent

First we consider $\Gamma_1^{(1)}(X)$.
Using formula \eqref{Ild1est} for $J=(\lambda_0X,X]$, \eqref{Gamma1i}
and treating the reminder term by the same way as for $\Gamma_3^{(1)}(X)$
we find
\begin{equation} \label{Gamma11est1}
\Gamma_1^{(1)}(X)=B(X)\sum\limits_{d\leq D}\frac{\chi(d)}{\varphi(d)}
+\mathcal{O}\bigg(\frac{\varepsilon X^2}{\log X}\bigg)\,,
\end{equation}
where
\begin{equation*}
B(X)=\int\limits_{\lambda_0X}^{X}\int\limits_{\lambda_0X}^{X}\int\limits_{\lambda_0X}^{X}
\theta(\lambda_1y_1+\lambda_2y_2+\lambda_3y_3+\eta)\,dy_1\,dy_2\,dy_3\,.
\end{equation*}
According to (\cite{Dimitrov4}, Lemma 4) we have
\begin{equation}\label{Best}
B(X)\gg\varepsilon X^2\,.
\end{equation}
Denote
\begin{equation} \label{Sigmaf}
\Sigma=\sum\limits_{d\leq D}f(d)\,,\quad f(d)=\frac{\chi(d)}{\varphi(d)}\,.
\end{equation}
We have
\begin{equation}\label{fdest}
f(d)\ll d^{-1}\log\log(10d)
\end{equation}
with absolute constant in the Vinogradov's symbol. Hence the corresponding Dirichlet series
\begin{equation*}
F(s)=\sum\limits_{d=1}^\infty\frac{f(d)}{d^s}
\end{equation*}
is absolutely convergent in $Re(s)>0$. On the other hand $f(d)$ is a multiplicative with
respect to $d$ and applying Euler's identity we obtain
\begin{equation}\label{FT}
F(s)=\prod\limits_pT(p,s)\,,\quad T(p,s)=1+\sum\limits_{l=1}^\infty f(p^l)p^{-ls}\,.
\end{equation}
By \eqref{Sigmaf} and \eqref{FT} we establish that
\begin{equation*}
T(p,s)=\left(1-\frac{\chi(p)}{p^{s+1}}\right)^{-1}\left(1+\frac{\chi(p)}{p^{s+1}(p-1)}\right)\,.
\end{equation*}
Hence we find
\begin{equation}\label{Fs}
F(s)=L(s+1,\chi)\mathcal{N}(s)\,,
\end{equation}
where $L(s+1,\chi)$ -- Dirichlet series corresponding to the character $\chi$ and
\begin{equation}\label{Ns}
\mathcal{N}(s)=\prod\limits_p \left(1+\frac{\chi(p)}{p^{s+1}(p-1)}\right)\,.
\end{equation}
From the properties of the $L$ -- functions it follows that $F(s)$ has an analytic continuation to $Re(s)>-1$.
It is well known that
\begin{equation}\label{Lsest}
L(s+1,\chi)\ll1+\left|Im(s)\right|^{1/6}\quad \mbox{for}\quad Re(s)\geq-\frac{1}{2}\,.
\end{equation}
Moreover
\begin{equation}\label{Nsest}
\mathcal{N}(s)\ll1\,.
\end{equation}
By \eqref{Fs}, \eqref{Lsest} and \eqref{Nsest} we deduce
\begin{equation}\label{Fsest}
F(s)\ll X^{1/6}\quad \mbox{for}\quad Re(s)\geq-\frac{1}{2}\,,\quad |Im(s)|\leq X\,.
\end{equation}
Usin \eqref{Sigmaf}, \eqref{fdest} and Perron's formula given at Tenenbaum (\cite{Tenenbaum}, Chapter II.2)
we obtain
\begin{equation}\label{SigmaPeron}
\Sigma=\frac{1}{2\pi i}\int\limits_{\varkappa- iX}^{\varkappa+iX}F(s)\frac{D^s}{s}ds
+\mathcal{O}\left(\sum\limits_{t=1}^\infty\frac{D^\varkappa\log\log(10t)}{t^{1+\varkappa}
\left(1+X\left|\log\frac{D}{t}\right|\right)}\right)\,,
\end{equation}
where $\varkappa=1/10$. It is easy to see that the error term above is $\mathcal{O}\Big(X^{-1/20}\Big)$.
Applying the residue theorem we see that the main term in \eqref{SigmaPeron} is equal to
\begin{equation*}
F(0)+\frac{1}{2\pi i}\left(\int\limits_{1/10-iX}^{-1/2-i X}+
\int\limits_{-1/2-iX}^{-1/2+i X}+\int\limits_{-1/2+i X}^{1/10+i X}\right)F(s)\frac{D^s}{s}ds\,.
\end{equation*}
From \eqref{Fsest} it follows that the contribution from the above integrals is $\mathcal{O}\Big(X^{-1/20}\Big)$.\\
Hence
\begin{equation}\label{Sigmaest}
\Sigma=F(0)+\mathcal{O}\Big(X^{-1/20}\Big)\,.
\end{equation}
Using \eqref{Fs} we get
\begin{equation}\label{F0}
F(0)=\frac{\pi}{4}\mathcal{N}(0)\,.
\end{equation}
Bearing in mind \eqref{Gamma11est1}, \eqref{Sigmaf}, \eqref{Ns}, \eqref{Sigmaest}
and (\ref{F0}) we find a new expression for $\Gamma_1^{(1)}(X)$
\begin{equation}\label{Gamma11est2}
\Gamma_1^{(1)}(X)=\frac{\pi}{4}\prod\limits_p \left(1+\frac{\chi(p)}{p(p-1)}\right) B(X)
+\mathcal{O}\bigg(\frac{\varepsilon X^2}{\log X}\bigg)+\mathcal{O}\Big(B(X)X^{-1/20}\Big)\,.
\end{equation}
Now \eqref{Best} and \eqref{Gamma11est2} yield
\begin{equation}\label{Gamma11est3}
\Gamma_1^{(1)}(X)\gg\varepsilon X^2\,.
\end{equation}

\subsection{Estimation of $\mathbf{\Gamma_1^{(2)}(X)}$}
\indent

Arguing as in the estimation of $\Gamma_3^{(2)}(X)$ we get
\begin{equation} \label{Gamma12est}
\Gamma_1^{(2)}(X)\ll\frac{\varepsilon X^2}{\log X}\,.
\end{equation}

\subsection{Estimation of $\mathbf{\Gamma_1^{(3)}(X)}$}
\indent

From \eqref{Ild3est2} and \eqref{Gamma1i} we have
\begin{equation}\label{Gamma13est}
\Gamma_1^{(3)}(X)\ll\sum\limits_{m<D}\frac{1}{d}\ll \log X\,.
\end{equation}

\subsection{Estimation of $\mathbf{\Gamma_1(X)}$}
\indent

Summarizing  \eqref{Gamma1decomp}, \eqref{Gamma11est3}, \eqref{Gamma12est} and \eqref{Gamma13est} we deduce
\begin{equation} \label{Gamma1est}
\Gamma_1(X)\gg\varepsilon X^2\,.
\end{equation}

\section{Proof of the Theorem}\label{Sectionfinal}
\indent

Taking into account \eqref{varepsilon}, \eqref{GammaGamma0}, \eqref{Gamma0decomp},
\eqref{Gamm3est}, \eqref{Gamma2est2} and \eqref{Gamma1est} we obtain
\begin{equation*}
\Gamma(X_j)\gg\varepsilon X_j^2=\frac{X_j^2(\log\log X_j)^7}{(\log X_j)^{\theta_0}}\,.
\end{equation*}
The last lower bound implies
\begin{equation}\label{Lowerbound}
\Gamma(X_j) \rightarrow\infty \quad \mbox{ as } \quad X_j\rightarrow\infty\,.
\end{equation}
Bearing in mind  \eqref{Gamma} and \eqref{Lowerbound} we establish Theorem \ref{Theorem}.

\vskip20pt
\footnotesize
\begin{flushleft}
S. I. Dimitrov\\
Faculty of Applied Mathematics and Informatics\\
Technical University of Sofia \\
8, St.Kliment Ohridski Blvd. \\
1756 Sofia, BULGARIA\\
e-mail: sdimitrov@tu-sofia.bg\\
\end{flushleft}

\end{document}